\newcommand{\R}{\mathbb{R}}
\newcommand{\C}{\mathbb{C}}
\newcommand{\N}{\mathbb{N}}
\newcommand{\E}{\mathbb{E}}
\newcommand{\U}{\mathcal{U}}
\renewcommand{\H}{\mathbb{H}}
\renewcommand{\O}{\mathbb{O}}
\newcommand{\B}{\mathcal{B}}
\newcommand{\Bo}{\mathcal{B}_\O}
\newcommand{\Boa}{\mathcal{B}_{\O1}}
\newcommand{\Bob}{\mathcal{B}_{\O2}}
\newcommand{\Bh}{\mathcal{B}_\H}
\newcommand{\norm}[1]{\left\lVert#1\right\rVert}
\theoremstyle{theorem}
\newtheorem{theorem}{Theorem}
\newtheorem{thm}{Theorem}
\theoremstyle{prop}
\newtheorem{prop}[thm]{Proposition}
\theoremstyle{requirement}
 \newtheorem{requirement}{Axiom}
\theoremstyle{property}
\newtheorem{property}{Property}
\newtheorem*{main_question*}{Main Question}
\newcommand{\ignore}[1]{}
\crefname{prop}{Proposition}{Propositions}
\let\xx@thm\@thm
\let\xx@prop\@prop
\let\xx@cor\@cor
\title{ Elementary Derivations of the Real Composition Algebras\\ 
	\vspace{5pt}
  \large{Adapted from Gadi Moran's last paper}}
\author{
     Tomer Moran\thanks{McGill University} 
	\and Shay Moran\thanks{Mathematics Department, Technion Israel, and Google Research}
    \and Shlomo Moran \thanks{CS Department, Technion Israel}}
\date{\today}
\begin{document}

\maketitle

\abstract{

{``Real Normed Algebras Revisited,'' the last paper of the late Gadi Moran, attempts to reconstruct the discovery of the complex numbers, 
	the quaternions and the octonions, as well as proofs of their properties, using only what was known to 19th century mathematicians. 
	In his research, Gadi had discovered simple and elegant proofs of the above-mentioned classical results 
	using only basic properties of the geometry of Euclidean spaces and tools from high school geometry. 
	His reconstructions underline an interesting connection between Euclidean geometry and these algebras,  
	and avoid the advanced machinery used in previous derivations of these results. 
	The goal of this article is to present Gadi's derivations in a way that is accessible to a wide audience of readers.}

\section{Introduction}
Inspired by how complex numbers could be represented as points {in the} plane, 
	Sir William Rowan Hamilton attempted to create an algebra\footnote{An algebraic structure that supports addition and multiplication, to be defined soon.} 
	in 3-dimensional space. Unaware of the fact that his requirements were impossible to meet, 
	it took him some time and effort before he eventually realized, 
	in 1843, that such an algebra required a fourth dimension. 
	Hamilton then discovered the quaternions, a noncommutative 4-dimensional algebra, 
	and he devoted the remainder of his life to studying and teaching them \cite{gadis, vdw76}. 
	Inspired by Hamilton's discovery, Graves discovered shortly thereafter the octonions \cite{graves1845}, 
	and at about the same time they were discovered also by Cayley \cite{cayley1845}. 
	Subsequently it was proved that the complex numbers, 
	the quaternions, and the octonions are the only possible extensions of the real numbers 
	to  {algebras} that satisfy certain natural properties. %; structures satisfying such properties are 	called {\em Euclidean Hurwitz algebras}. 
	Excellent expositions of these discoveries and their applications can be found in \cite{Baez02} and \cite{conway2003quaternions}. %

In an attempt to reconstruct Hamilton’s discovery of the quaternions 
	using only what was then known to Hamilton and to other mathematicians of the 19th century, 
	the late Gadi Moran developed an alternative derivation of {these structures.  % Euclidean Hurwitz algebras. 
	Specifically, Gadi's derivations assume the structure of a {\it Hurwitz algebra} with respect to the standard Euclidean norm, i.e., {\it Euclidean} Hurwitz algebras.}
	Interestingly, his proofs rely only on elementary properties of the Euclidean {distance}, 
	which makes them accessible and easy to appreciate by students who first encounter the topic. 
	As Gadi mentions~\cite{gadis}, it turns out that his derivation significantly diverges from Hamilton’s, as described in \cite{vdw76}. 
	Moreover, Gadi used his technique to derive a complete characterization of Euclidean Hurwitz algebras, 
	thus proving  {Hurwitz's celebrated theorem} \cite{Hurwitz1922} using only tools of high-school geometry.

%Unfortunately, Gadi  passed away before he was able to publish his work. 
  {Unfortunately, Gadi had tragically and unexpectedly passed away on January 1, 2016 in a train accident, before he was able to publish his work. %As his close relatives (grandson, nephew  and brother), we have taken it upon ourselves to   publish Gadi's most recent version posthumously  \cite{gadis}.
	Since Gadi was always enthusiastic about exposing the beauty of mathematics to a wide audience, we, as his close relatives (grandson, nephew  and brother), have taken upon ourselves to
	write this article, with the aim of making Gadi's proofs accessible to people who master high school mathematics.} 
	The definitions of the relevant algebraic concepts---e.g., {\em field, linear space, norm, algebra}---are usually studied in a first-year course of college mathematics.

\paragraph{Organization.} 
We first formally state the requirements for Euclidean Hurwitz algebras in \cref{sect:premises}. Then, we present Gadi’s derivation of complex numbers in \cref{sect:complex}; his derivation of quaternions in \cref{sect:quaternions}; his derivation of octonions  in \cref{sect:octonions};  and finally his proof of Hurwitz’s theorem is completed in \cref{sect:sedenions}.
In \Cref{sec:history} we discuss the relationship between the classical theorems of Hurwitz and  Frobenius  %and discuss their relationship
and Gadi's development of the complex numbers, the quaternions, and the octonions.

\section{Premises} \label{sect:premises}
\paragraph{Geometry of numbers.}

Our starting point is $\R$, the {\em field} of real numbers, which (like any field) is equipped with addition and multiplication. Geometrically, $\R$ is naturally identified with the one-dimensional Euclidean space\footnote{By $n$-dimensional Euclidean space, we mean a vector space over $\R^n$ with the standard Euclidean distance.} (i.e., the real line), denoted by $\E^1$. An extension of paramount importance of $\R$ is $\mathbb{C}$, the field of complex numbers. Analogously, $\mathbb{C}$ is identified with the two-dimensional Euclidean space $\E^2$ (i.e., the real plane). The latter identification follows from associating the complex number~$x+y\cdot i\in\mathbb{C}$ with the point $(x,y)\in\E^2$. This suggests the following question, which motivates the study of Euclidean Hurwitz algebras:
\begin{quote}
For which values of $n \in \mathbb{N}$ can the $n$-dimensional Euclidean space $\E^n$ be equipped with multiplication?
\end{quote}
The fields $\R$ and $\mathbb{C}$ witness that it can be done for $n=1,2$. To address this question more systematically, one needs to specify which properties this multiplication should satisfy. We address this in the next section.

\subsection{Formal requirements}

Our goal is to extend the vector space $\E^n$ to an algebraic structure that also supports a multiplication. In this section we specify three basic axioms we require of the desired multiplication. The first two axioms are {\it algebraic}, and they can be imposed on vector spaces over arbitrary fields  (in modern algebra they capture a structure called {\it algebra}.) 
The third axiom is {\it geometric} in the sense that it hinges on the Euclidean norm, which is special to real vector spaces. All three axioms are trivially satisfied by the complex plane and the real line.

It is natural to require the {\it distributive law}---the basic property that ties multiplication and addition:

% for every $x,y,z\in \E^n$, $x(y+z)=xy +xz$, and $(y+z)x=yx+zx$. Moreover, as a vector space, $\E^n$ already supports vector addition and scalar multiplication. We thus require that the sought addition will agree with the standard vector addition, and identify the scalar $c\in\R$ with the vector $(c,0\ldots 0)\in \E^n$ such that multiplying~$x=(x_1,\ldots,x_n)\in E^n$ with $(c,0\ldots 0)$ agrees with the standard scalar-multiplication. \footnote{Addition is the component wise real addition: $(x_1,\ldots,x_n)+(y_1,\ldots,y_n)=(x_1+y_1,\ldots,x_n+y_n)$, and multiplication of a scalar $c\in \R$ with $x\in\E^n$ is defined by$cx=(cx_1,\ldots, cx_n)$.}

%To conclude the discussion in the above paragraph, our goal is to extend the vector space $\E^n$ to an algebra which also supports a multiplication. To do so, we just need to specify a multiplication rule of any two elements of $\E^n$. It is required that multiplication be bilinear, i.e.\ it is left and right distributive over addition, and that it is also homegeneous over multiplication with scalar:

\begin{requirement}[Left and Right Distributive Laws]\label{axiom:dist}
For all $x,y,z\in\E^n$,
\begin{align*}
x\cdot (y+z)&=x\cdot y+x\cdot z;\\
 (y+z)\cdot x &=y\cdot x+z\cdot x.
 \end{align*}
\end{requirement}

%We further identify the scalar $c\in\R$ with the vector $(c,0,\ldots,0)\in \E^n$ such that multiplying~$x=(x_1,\ldots,x_n)\in E^n$ with $(c,0,\ldots,0)$ agrees with the standard scalar multiplication:

The following axiom, which is a basic tool in our geometric construction,  identifies a {\em scalar} $a\in\R$   with the vector $(a,0,\ldots,0)\in \E^n$ in the sense that   multiplying~$x=(x_1,\ldots,x_n)\in E^n$ with $(a,0,\ldots,0)$ agrees with the standard scalar multiplication of $x$ by $a$. We note that \cref{axiom:homo} is equivalent to assuming that multiplication is homogeneous and that there exists a {\em unit vector} $e$ such that $e\cdot x=x\cdot e=x$ for all $x\in E^n$. 
A didactic derivation of this axiom and motivation for it can be found in Gadi's work \cite{gadis}. 
\begin{requirement}[Scalars as Vectors]\label{axiom:homo}
	For all $x,y\in\E^n$ and $a\in\mathbb{R}$:
	\[a(xy) = (ax)y = x(ay),\]
	where $a\in\mathbb{R}$ is identified with\footnote{The choice of $(a,0,\ldots,0)$ is done for the sake of concreteness.
	Alternatively, one can identify the scalars with any 1-dimensional subspace of $\E^n$, i.e., a line through the origin.}
	$(a,0,\ldots,0)\in\E^n$.
\end{requirement}

%\[\begin{array}  {lll|l}
%(\forall x,y,z\in \E^n): x(y+z)&=&xy+xz;~~
% (y+z)x =yx+zx & \mbox{Left and right distributive law;}\\
%(\forall x,y\in \E^n, a\in\mathbb{R}): a(xy)&=&(ax)y=x(ay) & \mbox{Homogeneity,}
% \end{array}\]
%\new{where $\mathbb{R}$ is identified with some 1-dimensional subspace of $\E^n$, without loss of generality with the line $\{(a,0\ldots 0)\in\E^n : a\in \mathbb{R}\}\subseteq\E^n$.}

%First, it is natural to define multiplication of a point by a scalar (a real number) as a “movement” of the point away or towards the origin along the line segment defined by the point and the origin. The length of that segment is thus \emph{scaled} accordingly. Note that this implies that $(1,0,\ldots,0)$ is the identity element, to be denoted $1$. It is also natural to require that this extended scalar multiplication be associative, commutative, and distributive over addition. %, and have $1$ as the identity element.
 % Namely, it must satisfy 
%This allows us to partially define the product $xy$ where at least one of the factors are real. Similarly, in higher dimensions,
\vspace{1mm}

Last but not least, we require that the sought multiplication is compatible with the geometry of $\E^n$ in the following sense.
Recall that $\E^n$ is equipped with the Euclidean norm, which represents length, and is defined by
\[\norm{x}=\norm{(x_1,\ldots,x_n)}=\sqrt{x_1^2+\cdots+x_n^2}.\]
The desired multiplication is required to satisfy the following {\em multiplicity of norm} rule:

\begin{requirement}[Multiplicity of Norm (MoN)]\label{axiom:mon}
For all $x,y\in\E^n$, 
\[\norm{xy}=\norm{x}\norm{y}.\]
That is, the norm of a product is the product of the norms. 
\end{requirement}

%Last but not least, we assign a {\em norm} (which represents length) to vectors in $\E^n$. 
%Specifically,  the sought multiplication is required to satisfy the following {\em multiplicity of norm} rule:
%\begin{itemize}
% 	\item[\textbf{MoN.}]\label{item:A} \textbf{Multiplicity of Norm.}  {For all $x,y\in\E^n$}, $\norm{xy}=\norm{x}\norm{y}$.
%	In words: the norm of a product is the product of the norms. 
% 	\end {itemize}
%The norm which will be used is the standard {\it Euclidean norm}: $$\norm{x}=\norm{(x_1,\ldots,x_n)}=\sqrt{x_1^2+\cdots+x_n^2}.$$
%\shay{Consider summarizing the axioms/requirements in a definition environment.}

Summarizing the above, our task reduces to the following question: 
\begin{center}
\noindent\fbox{
\parbox{.875\columnwidth}{
\begin{main_question*}
For what values of $n>1$ it is possible to define multiplication over~$\E^n$ that satisfies the following three properties?
\begin{itemize}
\item[(i)] left and right distributivity (\Cref{axiom:dist}), 
\item[(ii)] scalars as vectors (\Cref{axiom:homo}), and 
\item[(iii)] Multiplicity of Norm (MoN) rule (\Cref{axiom:mon}).
\end{itemize}
\end{main_question*}
}}
\end{center}
The complete answer to this question, which was explored by Hamilton and his contemporaries and reproduced here, shows that $\C$ is the only Euclidean Hurwitz algebra over $\E^2$, and that the only other possible Euclidean Hurwitz algebras are the quaternions over $\E^4$, in which multiplication is not commutative, and the octonions over $\E^8$, in which multiplication is neither commutative or associative.

\subsection{Useful properties}
The proofs presented use elementary geometric arguments. Specifically, they use solely \Cref{property:p1} and \Cref{property:p2} below,
both of which are implied by the next versions of the triangle inequality and the Pythagorean theorem.\footnote{Note that the Pythagorean theorem is implied by the Euclidean norm.}
(i) The triangle inequality: $\norm{x}+ \norm{y} \geq \norm{x+y}$ for all $x,y\in \E^n$,
with equality if and only if  $x,y$ lie on the same ray  extended from the origin,
%$y$ lies on the line segment connecting $x$ and $z$, 
and (ii) the Pythagorean theorem: $\norm{x}^2+\norm{y}^2 = \norm{x+y}^2$ if and only if $x \perp y$, i.e., $x$ and $y$ are orthogonal, or~$x$ is perpendicular to $y$.  

\begin{property}[Equality Statement]\label{property:p1}
Let $x,y \in \E^n$; then
\[\Bigl(\norm{x+y}=\norm{x}+\norm{y}\quad \text{ and }\quad\norm{x}=\norm{y}\Bigr)\iff \bigl(x=y\bigr).\]
\end{property}
Note that \Cref{property:p1} is a corollary of the version of the triangle inequality stated above. 
 {Indeed, the direction~``$\Longleftarrow$'' is trivial, and the direction ``$\implies$'' follows since by the triangle inequality $\norm{x}+\norm{y}=\norm{x+y}$ implies that
$y=\alpha\cdot x$ for some $\alpha \geq 0$, and $\norm{x}=\norm{y}$ implies that $\alpha=1$, hence $x=y$.
}

\begin{property}[Orthogonality Statement]\label{property:p2}
If $\norm{x}=\norm{y}=1$; then 
\[x\perp y \Longleftrightarrow \norm{x+y}=\sqrt{2}.\]
\end{property}
Note that \Cref{property:p2} is a corollary of the Pythagorean theorem.

\paragraph{Enough to define a multiplication over a basis.}
Since bilinearity (i.e., the left and right distributive laws and homogeneity) is always required, a multiplication over $\E^n$ spaces is determined entirely by its definition over any basis of $\E^n$. Indeed, given a basis $e_1, e_2, \dots, e_n$, any $x\in\E^n$ is a linear combination of the form
 \[x=\alpha_1 e_1 + \alpha_2 e_2 + \dots + \alpha_n e_n, 
 ~~\mbox{	where } \alpha_i \in \R ~\mbox{ are real coefficients.}\]
%\new{Thus, if one defines a multiplication which satisfies the desired axioms with respect to every two elements in a basis, 
%then by linearity the desired axioms are satisfied over the entire space.}

For the rest of the article, elements of $\E^n$ are represented by the  {\em standard} orthonormal basis $\{e_1,\ldots,e_n\}$, where $e_i$ has 1 in the $i$th entry and 0 otherwise. We remind the reader that the element $e_1=(1,0,\ldots,0)$ has a special role: it is the unit element in the assumed Euclidean Hurwitz algebra. (See \Cref{axiom:homo}.)

%Whenever we discuss a Euclidean Hurwitz algebra over $\E^n$ we will assume (without loss of generality) that its unit element is $e_1=(1,0,\ldots,0)$. (See \Cref{axiom:homo}).}
  %the basis elements correspond to numbers in this algebra. In particular,
% 	the basis element $e_1=(1,0,\ldots,0)$ corresponds to that unit element.} % of this algebra.} 

With these postulates in hand we are now ready to extend the real numbers to algebras of higher dimensions in a systematic way.  {We will derive the necessary conditions for these extensions. The sufficiency of these conditions (specifically checking that the MoN rule (\Cref{axiom:mon}) is satisfied by each given extension) can be verified by brute-force calculations.}

\section{The Complex Numbers} \label{sect:complex}
To demonstrate our approach we show how the complex numbers are naturally and easily derived from \Cref{property:p2} and MoN. % (\Cref{axiom:mon}).
 We start with a general useful result for Euclidean Hurwitz algebras over  $\E^n,n\in\N$.

Let $\U_n = \R^\perp$ denote the orthogonal complement of $\R$ in $\E^n$. That is, $\U_n$ is the set of numbers in $\E^n$ that are orthogonal to all real numbers, i.e., the subspace of $\E^n$ spanned by $\{e_2,\ldots,e_n\}$.

\begin{prop}\label{prop:imaginary_squares}
	In any Euclidean Hurwitz algebra over $\E^n$, if $u \in \U_n$ and $\norm{u} = 1$, then  $u^2=-1$.
\end{prop}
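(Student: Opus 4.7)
The plan is to evaluate the product $(1+u)(1-u)$ in two complementary ways---its norm via the Multiplicity of Norm axiom, and its algebraic value via distributivity---and then to invoke the Equality Statement (\Cref{property:p1}) to pin down $u^2$.

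First I would compute the norms of the two factors. Since $u \in \U_n$ is orthogonal to $e_1 = 1$ and both are unit vectors, \Cref{property:p2} gives $\norm{1+u} = \sqrt{2}$. The vector $-u$ is also a unit vector orthogonal to $1$, so by the same reasoning $\norm{1-u} = \sqrt{2}$. The Multiplicity of Norm rule (\Cref{axiom:mon}) then yields $\norm{(1+u)(1-u)} = 2$. In parallel, I would expand $(1+u)(1-u)$ using only the distributive laws (\Cref{axiom:dist}) together with the two-sided-unit and scalar-commutation consequences of \Cref{axiom:homo}; this gives $(1+u)(1-u) = 1 + u - u - u^2 = 1 - u^2$. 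Crucially, no associativity or commutativity of multiplication is needed.

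Combining the two computations, $\norm{1 - u^2} = 2$. Since $\norm{u^2} = \norm{u}\cdot\norm{u} = 1 = \norm{1}$ by MoN, \Cref{property:p1} applied with $x = 1$ and $y = -u^2$ (noting $\norm{x}=\norm{y}=1$ and $\norm{x+y} = 2 = \norm{x}+\norm{y}$) forces $1 = -u^2$, and hence $u^2 = -1$. I do not anticipate a substantial obstacle: the one point requiring care is that the algebraic expansion must be justified solely by \Cref{axiom:dist} and \Cref{axiom:homo}, since associativity has not been postulated at this stage of the development.
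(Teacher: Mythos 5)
Your proposal is correct and follows essentially the same route as the paper's own proof: the paper expands $(u+1)(u-1)=u^2-1$, computes its norm as $2$ via MoN and \Cref{property:p2}, and applies \Cref{property:p1} to conclude $u^2=-1$, exactly mirroring your argument up to the sign of the factors. Your added remark that the expansion needs only distributivity and \Cref{axiom:homo} (not associativity) is a sound observation implicit in the paper's treatment.
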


\begin{proof} Consider the product $(u+1)(u-1)$. By MoN   and \Cref{property:p2}, we can evaluate its norm:
\begin{align*}
\norm{(u+1)(u-1)} &= \norm{u+1} \norm{u-1} \tag{MoN  }\\
			   &= \sqrt{2} \sqrt{2} = 2.        \tag{\Cref{property:p2}}
\end{align*}
The product can also be simplified by distributing: $(u+1)(u-1)=u^2-1$. Therefore we have
\[\norm{u^2-1}=2=1+1=\norm{u^2} + \norm{-1},\]
 where the last step uses MoN to evaluate $\norm{u^2} =\norm{u}^2 = 1$. 
We can rewrite the above as
\[\norm{u^2 + (-1)} = \norm{u^2} + \norm{-1}.\]
Given that $\norm{u^2}=1$, we can apply \Cref{property:p1} and deduce that $u^2 = -1$.  
\end{proof}

We next explain how \cref{prop:imaginary_squares} easily implies the existence and uniqueness of a Euclidean Hurwitz algebra over the Euclidean plane $\E^2$,
namely the well-known algebra of complex numbers $\C$. Let $i$ be the (number associated with the) unit vector $e_2=(0,1)\in\E^2$. Then $i\in\U_2$ and hence,  by Proposition \ref{prop:imaginary_squares}, $i^2=-1$ in any Euclidean Hurwitz algebra over $\E^2$. In addition $\{1,i\}=\{e_1,e_2\}$ is a basis of $\E^2$. Hence any  number $Z$ in such an algebra can be represented as  $Z=\alpha + \beta i$ for some $\alpha,\beta\in\R$. Thus, the only multiplication over $\E^2$ that satisfies the formal requirements in \cref{sect:premises} is the well-known multiplication over $\C$ defined by
  $$(\alpha +\beta i)(\gamma+\delta i) = (\alpha\gamma-\beta\delta) + (\alpha\delta + \beta\gamma)i.$$ 
  This proves that $\C$ is the only Euclidean Hurwitz algebra over $\E^2$, as claimed.

\section{The Quaternions} \label{sect:quaternions}

We now aim to extend the complex numbers to Euclidean Hurwitz algebras over $\E^n$ for $n>2$, or to prove that no such extension exists for a given $n$. We do so by defining multiplication over $\E^n$, or else show that such a definition is impossible.

A Euclidean Hurwitz algebra over $\E^n$ for $n>2$ must contain numbers $u,v \in \U_n$ such that $\{1,u,v\}$ is an orthonormal set. 
%For concreteness, we assume that   $u=(0,1,0,\ldots)$ and $v=(0,0,1,0,\ldots)$ (but this is not necessary). 
By Proposition \ref{prop:imaginary_squares} we get that $u^2=v^2=-1$. We next realize that $uv=-vu$, which implies that commutativity does not hold for Euclidean Hurwitz algebras over $\E^n$ if $n>2$.

\begin{prop}\label{prop:anticommutativity}
If $u,v \in \U_n$ are orthogonal and $\norm{u}=\norm{v}=1$, then $uv=-vu$.
\end{prop}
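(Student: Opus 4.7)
The plan is to reduce this to \cref{prop:imaginary_squares} by examining the square of $u+v$. Since $u,v \in \U_n$ and $\U_n$ is a linear subspace, the sum $u+v$ lies in $\U_n$ as well. By \cref{property:p2}, the orthogonality of $u$ and $v$ together with $\norm{u}=\norm{v}=1$ gives $\norm{u+v}=\sqrt{2}$. Hence the normalized vector $w := (u+v)/\sqrt{2}$ is a unit vector in $\U_n$, and \cref{prop:imaginary_squares} applies to yield $w^2=-1$.

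The next step is to translate $w^2=-1$ back into a statement about $u+v$. Using \cref{axiom:homo} with the scalar $1/\sqrt{2}$, we can pull the scalars out of the product to get $w^2 = \frac{1}{2}(u+v)^2$, so $(u+v)^2 = -2$. Expanding $(u+v)^2$ via both parts of \cref{axiom:dist} (distributing first on the left and then on the right) gives
\[(u+v)^2 = u^2 + uv + vu + v^2.\]

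Finally, applying \cref{prop:imaginary_squares} separately to $u$ and to $v$ yields $u^2 = v^2 = -1$, so the displayed identity becomes $-2 + uv + vu = -2$, from which $uv = -vu$ follows immediately. The only delicate point is the first step, where one has to verify that the hypotheses of \cref{prop:imaginary_squares} apply to $(u+v)/\sqrt{2}$; once that is in place, the rest is a routine algebraic expansion. There is no serious obstacle here, because every ingredient (orthogonality giving norm $\sqrt{2}$, the square of a unit imaginary being $-1$, and bilinearity of the product) has already been set up in the preceding sections.
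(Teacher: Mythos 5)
Your proof is correct, but it takes a genuinely different route from the paper. The paper proves $uv=-vu$ by a second application of the norm machinery: it factors $uv-vu=(u-v)(u+v)$, evaluates $\norm{(u-v)(u+v)}=2$ via MoN and \Cref{property:p2}, observes that $2=\norm{uv}+\norm{-vu}$, and invokes the equality statement (\Cref{property:p1}) to conclude. You instead use a polarization argument: you apply \cref{prop:imaginary_squares} to the normalized sum $(u+v)/\sqrt{2}$ (which is legitimate, since $\U_n$ is a subspace and \cref{prop:imaginary_squares} holds for \emph{every} unit vector in $\U_n$, not just basis vectors), deduce $(u+v)^2=-2$, and then expand bilinearly. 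Your version confines the geometric input to the single computation $\norm{u+v}=\sqrt{2}$ and otherwise proceeds by pure algebra; it also generalizes immediately to non-orthogonal $u,v$, giving $uv+vu=-2\langle u,v\rangle$, whereas the paper's argument is tailored to the orthogonal case. What the paper's approach buys is thematic consistency --- it reuses the same \Cref{property:p1}/\Cref{property:p2} template that drives every other proposition --- but both arguments are complete and rely only on results already established at this point in the text.
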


\begin{proof}
Since $u,v\in \U_n$ and $\norm{u}=\norm{v}=1$, \cref{prop:imaginary_squares} implies that $u^2=v^2=-1$, hence $(u-v)(u+v)=uv-vu$. Also, since $u$ and $v$ are orthonormal, \Cref{property:p2} implies that $\norm{u+v}=\norm{u-v}=\sqrt{2}$. Thus we get
\begin{equation}\label{eq:uv1}
\norm{uv-vu} =\norm{(u-v)(u+v)}=\norm{(u-v)}\norm{(u+v)} =2= \norm{uv} + \norm{-vu},
\end{equation}
where the rightmost equality uses MoN  to evaluate $\norm{uv}=\norm{-vu}=\norm{u} \norm{v}=1$. \Cref{eq:uv1} implies that
$\norm{uv + (-vu)} = \norm{uv} + \norm{-vu}$,  and
 we apply \Cref{property:p1} to deduce that $uv=-vu$. 
\end{proof}

 Note that \cref{prop:anticommutativity} holds even if the norm of $u, v$ is not 1 (so long as it is nonzero). %\shay{I am not sure I understand this last sentence, is it important?}
 Next we show that if $\{1,u,v\}$ is an orthonormal set in $\E^n$, then $uv$ must be a unit vector orthogonal to all three vectors. Since there is no such vector when $n=3$, we conclude that there is no Euclidean Hurwitz algebra over $\E^3$.

\begin{prop}\label{prop:fourth_dimension}
If $u,v \in \U_n$ are orthonormal, then $uv \in \U_n$, and $uv$ is orthogonal to $1,u,$ and $v$.
\end{prop}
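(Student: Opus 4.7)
The plan is to use \Cref{property:p2} together with MoN repeatedly. Since $\|u\|=\|v\|=1$, MoN immediately gives $\|uv\|=1$, so to show that $uv$ is orthogonal to a unit vector $w\in\{1,u,v\}$ it suffices (by \Cref{property:p2}) to verify that $\|uv+w\|=\sqrt{2}$. The strategy is, in each case, to rewrite $uv+w$ as a product of two factors whose norms are easy to compute.

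First I would handle $uv\perp u$ and $uv\perp v$, which are the easy cases. By the left distributive law, $uv+u = u(v+1)$, so MoN gives $\|uv+u\|=\|u\|\cdot\|v+1\|=\|v+1\|$. Since $v\in\U_n$ and $\|v\|=1$ we have $v\perp 1$ and $\|v\|=\|1\|=1$, so \Cref{property:p2} yields $\|v+1\|=\sqrt{2}$. Applying \Cref{property:p2} again (now to $uv$ and $u$, both unit vectors) gives $uv\perp u$. The argument for $uv\perp v$ is symmetric, using the right distributive law: $uv+v=(u+1)v$ and $\|u+1\|=\sqrt{2}$ because $u\in\U_n$ with $\|u\|=1$.

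The orthogonality $uv\perp 1$ is the main obstacle, since $uv+1$ does not factor as neatly via distributivity alone. The key trick is to use \Cref{prop:imaginary_squares}: because $u\in\U_n$ has $\|u\|=1$, we have $u^2=-1$, hence
\[u(v-u) = uv - u^2 = uv+1.\]
Then MoN gives $\|uv+1\|=\|u\|\cdot\|v-u\|=\|v-u\|$, and since $u,v$ are orthonormal, \Cref{property:p2} (applied to $v$ and $-u$, which are orthogonal unit vectors) gives $\|v-u\|=\sqrt{2}$. A final application of \Cref{property:p2} to the unit vectors $uv$ and $1$ shows $uv\perp 1$, i.e., $uv\in\U_n$.

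Combining the three orthogonality relations with $\|uv\|=1$ completes the proof. The only nontrivial observation is the algebraic identity $u(v-u)=uv+1$, whose validity depends crucially on the fact that unit vectors in $\U_n$ square to $-1$ (\cref{prop:imaginary_squares}); every other step is a mechanical application of MoN, distributivity, and the geometric Properties~\ref{property:p1}--\ref{property:p2}.
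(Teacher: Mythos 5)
Your proposal is correct and follows essentially the same route as the paper: factor $uv+w$ (for $w\in\{u,v,1\}$) as a product of two vectors of norm $1$ and $\sqrt2$ via distributivity and $u^2=-1$, then apply MoN and \Cref{property:p2}. The only cosmetic difference is that for the case $uv\perp 1$ the paper writes $uv-1=u(v+u)$ and concludes $uv\perp -1$, whereas you write $uv+1=u(v-u)$ and conclude $uv\perp 1$ directly; these are interchangeable.
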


\begin{proof} By \cref{prop:imaginary_squares} we get that $uv-1 = u(v+u)$. Thus the  norm of $uv-1$ can be evaluated using MoN  and \Cref{property:p2}:
\begin{equation}
\norm{uv-1}=\norm{u(v+u)}=\norm{u} \norm{v+u} = 1\sqrt{2} = \sqrt{2} .
\end{equation}
Since $\norm{uv+(-1)} = \sqrt{2}$,   \Cref{property:p2} implies that $uv$ is orthogonal to $-1$. This means that $uv \in \U_n$, and hence by Proposition \ref{prop:imaginary_squares} that $(uv)^2 = -1$. 

Similarly, applying MoN and \Cref{property:p2} to the expression $uv+u$ we get:
\[\norm{uv+u}=\norm{u(v+1)}=\norm{u} \norm{v+1} = 1\sqrt{2} = \sqrt{2} ,\]
which implies by \Cref{property:p2} that $uv$ is orthogonal to $u$. The same reasoning can be applied with $uv+v$ to finally show that $uv$ is orthogonal to $v$. 
\end{proof}

We have shown that if there exists a Euclidean Hurwitz algebra over $\E^n$, then products in $\U_n$ are anti-commutative, and that no real Euclidean Hurwitz algebra exists for $n=3$. Indeed, \cref{prop:fourth_dimension} implies that introducing a second imaginary dimension implicitly adds a third one, as shown by the fact that the number $uv$ is perpendicular to $1, u$, and $v$. We claim that these four vectors $\Bh=\{1,u,v,uv\}$ form an orthonormal basis for the 4-dimensional Euclidean space $\E^4$---to complete the proof that this indeed forms a Euclidean Hurwitz algebra, it remains to show that $\Bh$ is closed under multiplication; that is, the product of elements in $\E^4$ is also in $\E^4$. To prove the latter we first derive the following useful identity.

\begin{prop}\label{prop:identity}
    If $x,y \in \U_n$ are orthonormal, then $(xy)x=x(yx)=y$.
\end{prop}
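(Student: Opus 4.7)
The plan is to treat both identities by the template already used for \Cref{prop:imaginary_squares,prop:anticommutativity}: write $(xy)x + y$ (resp.\ $x(yx)+y$) as a product, use MoN plus \Cref{property:p2} to evaluate its norm as $2$, and then apply \Cref{property:p1} (noting $\norm{(xy)x}=\norm{xy}\norm{x}=1=\norm{y}$ by MoN).

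For $(xy)x = y$, the factorization I would use is
\[(xy + y)(x + 1) = (xy)x + yx + xy + y = (xy)x + y,\]
where the second equality relies on the anti-commutativity $xy + yx = 0$ from \cref{prop:anticommutativity}. Since $x \in \U_n$ has unit norm, $x \perp 1$, and so \Cref{property:p2} gives $\norm{x+1} = \sqrt{2}$. Writing $xy+y = (x+1)y$ and applying MoN, $\norm{xy+y} = \norm{x+1}\norm{y} = \sqrt{2}$, and a second appeal to MoN yields
\[\norm{(xy)x + y} = \norm{xy+y}\,\norm{x+1} = 2 = \norm{(xy)x} + \norm{y}.\]
\Cref{property:p1} then forces $(xy)x = y$.

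For $x(yx) = y$, I would use the mirror factorization
\[(x-1)(y - yx) = xy - y - x(yx) + yx = -\bigl(x(yx) + y\bigr),\]
again using $xy + yx = 0$. Exactly as above, $\norm{x-1} = \sqrt{2}$ and $\norm{y - yx} = \norm{y(1-x)} = \norm{y}\norm{1-x} = \sqrt{2}$, so $\norm{x(yx) + y} = 2$, and \Cref{property:p1} gives $x(yx) = y$.

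The only nontrivial step is guessing the factorizations; once written down, every other manipulation is a direct application of distributivity, anti-commutativity, MoN, and \Cref{property:p2}. I would emphasize that the argument does not invoke associativity of multiplication, which is important since this proposition is meant to be usable in the octonionic setting as well.
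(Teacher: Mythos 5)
Your proof is correct, and while it follows the same overall template (factorize, evaluate the norm via MoN and \Cref{property:p2}, conclude with \Cref{property:p1}), the route differs from the paper's in two respects. First, the paper uses a single triple product $\bigl((1+x)(1+y)\bigr)(1+x)$, expands it to $(x+y)+(1+xy)x$, and extracts $(xy)x=y$ by matching the two summands; your two-factor products $(xy+y)(x+1)$ and $(x-1)(y-yx)$ are simpler to expand and need only one application of MoN to the outer product (plus one to evaluate the norm of a factor). Second, the paper obtains the companion identity $x(yx)=y$ from the first one by two applications of \cref{prop:anticommutativity}, writing $x(yx)=-(yx)x=(xy)x$; note that the step $x(yx)=-(yx)x$ tacitly uses that $yx$ lies in $\U_n$ and is orthogonal to $x$, i.e., it leans on \cref{prop:fourth_dimension}. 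Your mirror factorization proves $x(yx)=y$ directly and avoids that dependency, which is a small but genuine gain in self-containment. Your closing observation that no associativity is invoked is accurate, though the paper's proof shares this feature since its triple product is explicitly parenthesized and MoN is applied to the factors as written.
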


\begin{proof}
	 Consider the product $\bigl((1+x)(1+y)\bigr)(1+x)$:
\begin{eqnarray}\label{eq:3}
    \bigl((1+x)(1+y)\bigr)(1+x) &=& (1+x+y+xy)(1+x)\nonumber \\
    &=& 1+x+y+xy+x+x^2+yx+(xy)x \nonumber\\
    &=& (x+y) + (1+xy)x.
\end{eqnarray}
Here the last step uses $xy=-yx$ and $x^2=-1$. By MoN  and equation (\ref{eq:3}) we obtain
\[2\sqrt{2}=\sqrt{2}\sqrt{2}\sqrt{2}=\norm{\bigl((1+x)(1+y)\bigr)(1+x)} = \norm{(x+y) + (1+xy)x}.\]
Next observe that $\norm{x+y}=
\sqrt{2}$ by \Cref{property:p2}, and $\norm{(1+xy)x}=\sqrt{2}$ by  \Cref{property:p2} and MoN. Thus we get
\begin{equation}\label{eq:2sqrt2}
    \norm{x+y} + \norm{(1+xy)x}  =2\sqrt{2}= \norm{(x+y) + (1+xy)x}.
\end{equation}
By \Cref{property:p1}, equation (\ref{eq:2sqrt2}) implies that the two terms are equal:
\[x+y = (1+xy)x = x+(xy)x,\]
which directly implies that $y = (xy)x$. Finally, two applications of \cref{prop:anticommutativity} yield $x(yx)=-(yx)x=(xy)x$.
\end{proof}

From \cref{prop:identity} we can easily show that $\E^4$ is closed under multiplication. Indeed, \Cref{prop:identity} (combined with \Cref{prop:anticommutativity} and \Cref{prop:imaginary_squares}) imply that the product of any two elements in the basis $\Bh = \{ 1, u, v, uv\}$ is in $\{\pm 1,\pm u, \pm v, \pm uv\}\subseteq \E^4$; by bilinearity and homogeneity the closure extends to the entire space. This implies the existence of a 4-dimensional Euclidean Hurwitz algebra, which we denote as $\H$. To show that this corresponds precisely to the quaternions, we first show that multiplication in $\H$ is associative.

\begin{prop}\label{prop:associativity}
    Products in $\H$ are associative. That is, for any $x,y,z \in \H$, $x(yz) = (xy)z$.
\end{prop}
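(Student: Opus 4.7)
The plan is to exploit bilinearity in order to reduce the identity $(xy)z = x(yz)$ to a finite check on the basis $\Bh = \{1,u,v,uv\}$, and then dispatch the remaining configurations using the propositions already established. Since multiplication is bilinear (by \Cref{axiom:dist} together with the homogeneity half of \Cref{axiom:homo}), both $x(yz)$ and $(xy)z$ are trilinear in $(x,y,z)$, so it is enough to verify associativity when each of $x,y,z$ ranges over $\Bh$. Whenever any one of $x,y,z$ equals $1$, both sides agree by \Cref{axiom:homo}. So I would immediately restrict attention to triples $(x,y,z)\in\{u,v,uv\}^3$.

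Next I would split the remaining verification by how many of $x,y,z$ coincide. When all three are equal to some $a$, both sides collapse to $a\cdot a^2 = -a$ by \Cref{prop:imaginary_squares}. When exactly two coincide, the three sub-configurations reduce to one-line computations: for $x=y\ne z$ one gets $(xx)z = -z$ by \Cref{prop:imaginary_squares} and $x(xz) = -(xz)x = -z$ by \Cref{prop:anticommutativity} followed by \Cref{prop:identity}; the cases $y=z\ne x$ and $x=z\ne y$ are handled symmetrically, the latter being a direct restatement of \Cref{prop:identity}. Finally, when $\{x,y,z\} = \{u,v,uv\}$, the six permutations are all resolved using the multiplication table of $\Bh$, which is completely determined by the earlier results: \Cref{prop:identity} gives $(uv)u = v$ and $v(uv) = u$, \Cref{prop:anticommutativity} then flips signs to give $u(uv) = -v$ and $(uv)v = -u$, and \Cref{prop:imaginary_squares} gives $(uv)^2 = -1$. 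Substituting into each of the six permutations shows that $(xy)z$ and $x(yz)$ land on the same basis vector.

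The main obstacle is not conceptual but organizational: the case analysis produces many subcases and the sign tracking through \Cref{prop:anticommutativity} must be done carefully, especially in the all-distinct case where products such as $u\cdot(uv)$ and $(uv)\cdot u$ differ by sign. One should also keep in mind that \Cref{prop:identity} requires its two arguments to be orthonormal elements of $\U_n$, a hypothesis automatically satisfied here thanks to \Cref{prop:fourth_dimension}, which guarantees that $\{u,v,uv\}$ is an orthonormal set in $\U_4$.
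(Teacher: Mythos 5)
Your proposal is correct and follows essentially the same route as the paper: reduce to the orthonormal basis $\Bh=\{1,u,v,uv\}$ by bilinearity, dispose of triples containing $1$ via \Cref{axiom:homo}, handle repeated-factor triples with \Cref{prop:imaginary_squares}, \Cref{prop:anticommutativity}, and \Cref{prop:identity}, and check the six all-distinct permutations against the multiplication table those propositions determine. If anything, your case analysis is slightly more systematic than the paper's (you treat the $y=z$ configuration explicitly, which the paper leaves implicit).
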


\begin{proof} 
As discussed at the end of \cref{sect:premises}, due to the bilinearity it suffices to prove associativity for products of elements in an orthonormal basis of $\H$, such as $\Bh= \{ 1,u,v,uv\}$.
The proof is trivial when either $x,y,$ or $z$ equal 1, by the properties of scalar multiplication (\Cref{axiom:homo}). 
Thus, we consider only triples in $\{u,v,uv\}$. For triples where a factor is repeated:
\begin{itemize}
    \item $x(xx) = x(-1) = (-1)x=(xx)x$ \hfill (by \cref{prop:imaginary_squares})
    \item $(xy)x = (-yx)x = -(yx)x = x(yx)$ \hfill (by \cref{prop:anticommutativity,prop:identity})
    \item $x(xy) = -(xy)x = -y =(xx)y  $ \hfill (by \cref{prop:anticommutativity,prop:identity})
\end{itemize}
   Of the $3!=6$ possible remaining products, we only prove 3; the other proofs follow essentially the same structure. All of the following derivations use \cref{prop:imaginary_squares,prop:anticommutativity,prop:fourth_dimension,prop:identity}. %Square brackets act just like regular parentheses and are used to emphasize on $[uv]$ being an element of $\Bh$.
To avoid ambiguity, we use $[uv]$ to denote the element $uv\in\Bh$. 
\begin{itemize} 
    \item $u(v[uv]) = u(u)=-1=[uv][uv]=(u v)[uv]$
    \item $[uv](vu) = -[uv](uv) =  1 = -(uu) = -([vu]v)u=([uv]v)u$
    \item $ v([uv]u) = vv= -1 = uu = (v[uv])u$
\end{itemize}
 \end{proof}
We have thus completed the discovery of the quaternions $\H$, and have proved that they form the only Euclidean Hurwitz algebra over the 4-dimensional Euclidean space. By relabeling $i=u$, $j=v$, $k=uv$, we recover Hamilton's familiar equations of quaternions:
\[i^2=j^2=k^2=ijk=-1.\] 

\section{The Octonions} \label{sect:octonions}

Any Euclidean Hurwitz algebra over $\E^n$ for $n>4$ contains the quaternions, and in addition it also contains a unit vector $w$ that is perpendicular to all vectors in $\Bh$. The next proposition shows that such an algebra must contain eight orthonormal vectors, i.e., its dimension is at least 8.
\begin{prop}\label{prop:eighth_dimension}
	Let $w$ be a unit vector that is orthogonal to all vectors in $\Bh$. Then the set $\Bo=\{1, u, v, uv, \\ w, uw, vw, (uv)w\}$ is orthonormal.
\end{prop}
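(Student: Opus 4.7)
My plan is to verify all $\binom{8}{2}=28$ pairwise orthogonalities and the $8$ unit-norm conditions by partitioning them into four natural groups. Unit norms are immediate from MoN, since every vector in $\Bo$ is a product of unit vectors. Orthogonalities inside $\Bh = \{1, u, v, uv\}$ are inherited from the quaternions (\cref{sect:quaternions}), and $w \perp \Bh$ is the hypothesis. Only the orthogonalities that involve $uw$, $vw$, or $(uv)w$ remain.

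I would first harvest what comes for free from \cref{prop:fourth_dimension}. For each $\alpha \in \{u, v, uv\}$, the pair $(\alpha, w)$ is orthonormal in $\U_n$, so \cref{prop:fourth_dimension} applied to this pair yields $\alpha w \in \U_n$ (and hence $\alpha w \perp 1$), together with $\alpha w \perp \alpha$ and $\alpha w \perp w$. This disposes of a large chunk of the remaining pairs without any real calculation.

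The cross terms are handled by two short factoring identities. For $\alpha \in \{u, v, uv\}$ and $\beta \in \Bh$ with $\beta \neq \alpha$, I would combine the associativity of $\H$ (\cref{prop:associativity}) with $\alpha^2 = -1$ (\cref{prop:imaginary_squares}) to rewrite
\[\alpha w + \beta \;=\; \alpha w - \alpha(\alpha\beta) \;=\; \alpha\bigl(w - \alpha\beta\bigr).\]
Since $\alpha\beta$ lies in $\{\pm 1, \pm u, \pm v, \pm uv\}$ and $w$ is orthogonal to every element of $\Bh$, \cref{property:p2} gives $\norm{w - \alpha\beta} = \sqrt{2}$, and then MoN gives $\norm{\alpha w + \beta} = \sqrt{2}$, so \cref{property:p2} concludes that $\alpha w \perp \beta$. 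For the last group, pairs $\alpha w$ and $\beta w$ with $\alpha \neq \beta$ in $\{u, v, uv\}$, right distributivity gives $\alpha w + \beta w = (\alpha + \beta) w$, and since $\alpha \perp \beta$ one has $\norm{\alpha + \beta} = \sqrt{2}$, so MoN finishes the job.

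The only substantive move is the factoring $\beta = -\alpha(\alpha\beta)$, which quietly exploits the already-proved associativity of $\H$; everything else is a repeated application of MoN, bilinearity, and the Pythagorean criterion of \cref{property:p2}. I do not anticipate any genuine obstacle beyond keeping the case analysis organized and making sure each triple used inside a factoring step lives in $\H$, so that \cref{prop:associativity} applies.
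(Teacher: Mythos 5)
Your proof is correct and takes essentially the same route as the paper: the paper packages your two factoring computations into a single observation---for unit vectors, $x\perp y \iff xz\perp yz \iff zx\perp zy$---and then notes that $\Bob=\Boa w$ is orthonormal because $\Boa$ is, and that writing $x=\pm yz$ with $z\perp w$ gives $x\perp yw$, which is exactly your identity $\alpha w+\beta=\alpha(w-\alpha\beta)$ with $z=-\alpha\beta$ made explicit. The only cosmetic differences are your use of \cref{prop:fourth_dimension} to dispose of some pairs up front and your appeal to \cref{prop:associativity} where \cref{prop:anticommutativity,prop:identity} alone would suffice for the instance $\alpha(\alpha\beta)=-\beta$.
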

\begin{proof}
	We use the following observation, implied by MoN, \Cref{property:p2}, and the distributive laws:
	\begin{equation}\label{eq:3perp}
 \mbox{For any three unit vectors}~x,y,z:~~x\perp y\Leftrightarrow xz\perp yz \Leftrightarrow zx\perp zy .\footnote{One can show that equation (\ref{eq:3perp}) also holds when $x,y,z$ are not necessarily unit vectors.}
	\end{equation} 
Let $\Boa = \Bh= \{1, u, v, uv\}$ and $\Bob = \{w, uw, vw, (uv)w\}$. We already know that $\Boa$ is orthonormal, which immediately implies by equation (\ref{eq:3perp}) that $\Bob$ is orthonormal as well.

It remains to show that each vector in $\Boa$ is perpendicular to all vectors in $\Bob$, namely that for any pair $x,y\in\Boa$, $x$ is orthogonal to $yw$. So let $x,y\in\Boa$ be given. It is easily verified that for some $z\in\Boa$ we have that $x=yz$ or $x=-yz$, so assume without loss of generality that $x=yz$ (the other case is similar). Since $z$ is orthogonal to $w$, by equation (\ref{eq:3perp}) we get that $x=yz$ is orthogonal to $yw$.
\end{proof}

It turns out that  $\Bo=\{1, u, v, uv, w, uw, vw, (uv)w\}$, the orthonormal set defined in \cref{prop:eighth_dimension} above, forms a basis for a Euclidean Hurwitz algebra over the 8-dimensional Euclidean space $\E^8$. This algebra is called the octonions and denoted by $\O$.  To see that $\O$ is indeed a Euclidean Hurwitz algebra it suffices to show that it is closed under multiplication. The product of two elements in $\Boa$ is inherited from the quaternions. For the other products we use the following useful identity:
\begin{prop}\label{prop:identity2}
	Let $x,y,z \in \U_n$ be orthonormal vectors such that $xy$ is orthogonal to $z$. Then $(xy)(yz)=xz$.
	\end{prop}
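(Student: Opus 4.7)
The plan is to mirror the strategy of \cref{prop:identity}: find a product that expands cleanly to $(xy)(yz) + xz$, then compute its norm two ways---via MoN and via direct expansion---and conclude via \cref{property:p1}. Since $\norm{(xy)(yz)} = \norm{xz} = 1$ by MoN, it suffices to show that $\norm{(xy)(yz) + xz} = 2$; \cref{property:p1} will then immediately yield $(xy)(yz) = xz$.

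The key factorization I would use is $(xy - z)(yz + x)$. Distributing,
\[(xy - z)(yz + x) = (xy)(yz) + (xy)x - z(yz) - zx.\]
Now $(xy)x = y$ and $z(yz) = y$ are instances of \cref{prop:identity} (applied to the orthonormal pairs $\{x,y\}$ and $\{y,z\}$ in $\U_n$), while $zx = -xz$ by \cref{prop:anticommutativity}. The two ``$y$'' terms cancel, leaving $(xy - z)(yz + x) = (xy)(yz) + xz$.

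On the norm side, MoN gives $\norm{(xy-z)(yz+x)} = \norm{xy - z}\cdot\norm{yz + x}$. The first factor equals $\sqrt{2}$ by the hypothesis $xy \perp z$ and \cref{property:p2}. For the second factor, I need $x \perp yz$; this follows from $xy \perp z$ by applying equation~(\ref{eq:3perp}) with pre-multiplier $y$: $xy \perp z \iff y(xy) \perp yz$, and another application of \cref{prop:identity} (with roles swapped) gives $y(xy) = x$, hence $x \perp yz$. Therefore $\norm{yz + x} = \sqrt{2}$ as well, and the product norm is $2$. Combining with the expansion above, $\norm{(xy)(yz) + xz} = 2 = \norm{(xy)(yz)} + \norm{xz}$, and \cref{property:p1} closes the proof. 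The only real obstacle is guessing the factorization $(xy - z)(yz + x)$; once it is in hand, every step is a routine application of results already established.
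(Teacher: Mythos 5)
Your proof is correct and is essentially the paper's own argument: the same factorization $(xy-z)(x+yz)$, the same expansion using \cref{prop:identity} and \cref{prop:anticommutativity} to reduce it to $(xy)(yz)+xz$, and the same norm computation concluded via \Cref{property:p1}. Your justification that $x\perp yz$ (via equation~(\ref{eq:3perp}) and $y(xy)=x$) is in fact spelled out more carefully than in the paper, which simply cites \cref{prop:identity} for that step.
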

\begin{proof}
	The following equalities hold for any set of orthonormal vectors $\{x,y,z\}$: 
	\begin{eqnarray}\label{eq:xzw}
	 (xy-z)(x+yz)&=&(xy)x+(xy)(yz)-zx-z(yz)  \\
	 &=&y+(xy)(yz)+xz-y  \nonumber \\
	&=&(xy)(yz)+xz, \nonumber
	\end{eqnarray}
	where the first and last terms were simplified using  \cref{prop:anticommutativity,prop:identity}. %Observe now that $xy,z\in\Boa$ and $w,yw\in\Bob$, hence
	Since $xy$ is orthogonal to $z$ we also have by \cref{prop:identity} that $x$ is orthogonal to $yz$. Thus, by MoN  and \Cref{property:p2},   %we evaluate the norm: \shlomo{Need to check the cases $xy=z;x=-yz$.}
	\[\norm{(xy-z)(x+yz)}=\sqrt{2}\sqrt{2}=2.\]
	Thus we conclude from equation (\ref{eq:xzw}) that
	\[2=\norm{(xy)(yz)+xz}=\norm{(xy)(yz)}+\norm{xz} ,\]
	which by \Cref{property:p1} implies that $(xy)(yz)=xz$.  
\end{proof}
 
We now show that the product of two distinct elements in $\Bob$, or its negation, is in $\Boa$. Let these elements be $xw,yw$, for some distinct $x,y\in\Boa$. If $x=1$ or $y=1$, then this follows by \cref{prop:identity}.  Otherwise, $x,w$, and $y$ satisfy the hypotheses of \cref{prop:identity2}, hence $(xw)(yw)=-(xw)(wy)=-xy$ and we are done.

It remains to show that products between elements of $\Boa$ and $\Bob$, or their negations, are in $\Bo$. This is accomplished by the following proposition, which as a by-product demonstrates that certain products in $\O$ are {\em anti-associative}.
\begin{prop}\label{prop:antiassociativity}
For any distinct $x,y \in \Boa$, we have $x(yw)=-(xy)w$.
\end{prop}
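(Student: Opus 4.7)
The plan is to follow the same template already used in \cref{prop:identity,prop:identity2}: pick a cleverly chosen product, evaluate its norm in two different ways (via MoN on one side, and via distributive expansion on the other), arrange for the unwanted cross-terms to cancel, and then invoke the Equality Statement (\cref{property:p1}) to identify the two remaining summands. Concretely, I will aim to show that
\[
\norm{x(yw) - (xy)w} = 2 = \norm{x(yw)} + \norm{-(xy)w},
\]
after which \cref{property:p1} (applied to the unit vectors $x(yw)$ and $-(xy)w$) will immediately give $x(yw) = -(xy)w$. Throughout, $x$ and $y$ are taken to be the distinct orthogonal imaginary units in $\Boa$ (the cases $x=1$ or $y=1$ are trivial and are outside the intended scope of the proposition), so that $x,y,w$ form an orthonormal triple in $\U_n$ and $xy \in \Boa\setminus\{1\}$ is also orthogonal to $w$ by \cref{prop:eighth_dimension}.

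The key algebraic move is to consider the product
\[
(x - xy)(w + yw) \;=\; xw + x(yw) - (xy)w - (xy)(yw),
\]
obtained by distributing. Applying \cref{prop:identity2} to the orthonormal triple $x,y,w \in \U_n$ (whose hypothesis $xy \perp w$ is exactly the orthogonality just noted) replaces $(xy)(yw)$ by $xw$, so the two $xw$ terms cancel and the right-hand side collapses to precisely $x(yw) - (xy)w$. On the other hand, $x \perp xy$ by \cref{prop:fourth_dimension} and $w \perp yw$ by \cref{prop:eighth_dimension}, so \cref{property:p2} gives $\norm{x - xy} = \norm{w + yw} = \sqrt{2}$, and MoN yields $\norm{(x-xy)(w+yw)} = 2$.

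Putting the two computations together, $\norm{x(yw) - (xy)w} = 2 = 1+1 = \norm{x(yw)} + \norm{-(xy)w}$, where $\norm{x(yw)} = \norm{(xy)w} = 1$ by MoN. \cref{property:p1} then forces $x(yw) = -(xy)w$, as desired. The main (and really only) obstacle is guessing the right product $(x - xy)(w + yw)$: the signs have to be arranged so that the ``leftover'' term $(xy)(yw)$ matches $xw$ under \cref{prop:identity2}, so that the cancellation is exact. Once this expression has been spotted, the rest of the argument is a routine verification of orthogonality hypotheses and an application of previously established identities.
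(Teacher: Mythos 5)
Your proposal is correct and follows essentially the same route as the paper: the paper's proof uses the very same product $(x-xy)(yw+w)$, collapses it to $x(yw)-(xy)w$ via \cref{prop:identity2}, computes its norm as $2$ by MoN and \cref{property:p2}, and concludes with \cref{property:p1}. Your explicit remark that $x,y$ should be taken to be the imaginary (non-unit) elements of $\Boa$ is a reasonable reading of the proposition's intended scope, matching the paper's implicit assumption when it invokes \cref{prop:identity2}.
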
 

\begin{proof}
	 First observe that $x,y$ and $w$ satisfy the hypotheses of \cref{prop:identity2}. Hence,
\begin{align*}
    (x-xy)(yw+w)&=x(yw)+xw-(xy)(yw)-(xy)w \\
    &=x(yw)+xw-xw-(xy)w \\
    &=x(yw)-(xy)w .
\end{align*}

 By MoN   and \Cref{property:p2}, we evaluate the norm:
\[\norm{(x-xy)(yw+w)}=\sqrt{2}\sqrt{2}=2.\]
Comparing with the reduced expression, we have
\[\norm{x(yw)-(xy)w}=\norm{x(yw)}+\norm{-(xy)w}.\]
By \Cref{property:p1}, we find that $x(yw)=-(xy)w$. 
\end{proof}

\cref{prop:identity2,prop:antiassociativity} above complete the definition of products over $\Bo$, the eight basis vectors of the octonions. Our derivation demonstrates that 
the octonions are the (only) Euclidean Hurwitz algebra over the 8-dimensional Euclidean space. During this process, we have also shown that no Euclidean Hurwitz algebra can exist in 5, 6, or 7 dimensions. We also note that unlike the reals, the complex numbers, and the quaternions, multiplication over the octonions is nonassociative. %This proves Frobenius's theorem \comment{for real normed algebras with the Euclidean norm:} %, which states that
 %only the 1-, 2-, and 4-dimensional real composition algebras are associative.\\

\section{Extending the Octonions} \label{sect:sedenions}

We next push our method further to check if there exists a Euclidean Hurwitz algebra over $\E^n$ for $n>8$. Again, such an algebra must contains the octonions, and in addition a unit vector $s \in \U_n$ that is orthogonal to all the vectors in $\Bo$. By repeating the arguments in \cref{prop:eighth_dimension}, we obtain an orthonormal set $\B'$ of 16 vectors, that we (wrongly) assume is a basis for a Euclidean Hurwitz algebra over $\E^{16}$:
\[\B'=\Big\{ 1,u,v,uv, \hspace{12pt} w,uw,vw,(uv)w, \hspace{12pt} s,us,vs,(uv)s, \hspace{12pt} ws,(uw)s, (vw)s, ((uv)w)s \Big\}.\]

\begin{prop}\label{prop:no16}
It is impossible to define a bilinear and homogeneous multiplication that satisfies MoN, \Cref{property:p1}, and \Cref{property:p2} over the $n$-dimensional Euclidean space, for $n>8$. % space spanned by $\B'$.

\end{prop}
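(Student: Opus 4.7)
The plan is to derive a contradiction from MoN by constructing two elements of $\B'$ with norm $\sqrt{2}$ whose product vanishes, so that the hypothesized 16-dimensional Euclidean Hurwitz algebra must possess zero divisors.

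First, I would lift the main octonion identities to the hypothesized algebra. Repeating the argument of \cref{prop:antiassociativity} verbatim with $w$ replaced by $s$ and $\Boa$ replaced by $\Bo$, for any distinct $x,y \in \Bo \setminus \{1\}$, the expansion of $(x - xy)(ys + s)$ together with MoN and \cref{property:p2} yields
\[x(ys) = -(xy)s.\]
Combining this with \cref{prop:anticommutativity} also gives $(xs)y = -(xy)s$. Applying \cref{prop:identity2} to the triple $(x, s, y)$, while using $ys = -sy$, yields
\[(xs)(ys) = -xy\]
for orthonormal $x,y \in \Bo \setminus \{1\}$.

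Next, I would consider the two vectors
\[a = u + (vw)s, \qquad b = v + (uw)s.\]
Each is a sum of two orthonormal basis vectors of $\B'$, so by \cref{property:p2} we have $\|a\| = \|b\| = \sqrt{2}$. Expanding $ab$ bilinearly gives four terms, each reducible using the three identities above together with the octonion-level computations $u(uw) = -w$, $(vw)v = w$, and $(vw)(uw) = uv$ (all derivable from \cref{prop:anticommutativity}, \cref{prop:identity}, and \cref{prop:identity2}; the last one from \cref{prop:identity2} applied to the triple $(v, w, -u)$). A direct calculation then shows
\[ab = uv + ws - ws - uv = 0.\]

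Finally, the contradiction: MoN would force $\|ab\| = \|a\| \cdot \|b\| = 2$, but we have just computed $\|ab\| = 0$. Hence no such algebra exists, and the same contradiction propagates to any $n > 8$ since a Hurwitz algebra on $\E^n$ for $n > 8$ would contain such a 16-dimensional subspace. The main technical obstacle will be correctly evaluating the four cross terms of $ab$---in particular the identity $(vw)(uw) = uv$, whose derivation requires the non-obvious choice of $z = -u$ in \cref{prop:identity2}.
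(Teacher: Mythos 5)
Your proposal is correct and follows the same core strategy as the paper: exhibit two vectors of norm $\sqrt{2}$ in the would-be algebra whose product is $0$, contradicting MoN. The difference lies in the choice of witnesses and the machinery needed to evaluate their product. The paper takes the pair $uv+ws$ and $sv+wu$, whose four cross terms all collapse using \cref{prop:anticommutativity} and \cref{prop:identity2} alone (each term is, after a sign flip from anticommutativity, of the form $(xy)(yz)$ with $x,y,z\in\{u,v,w,s\}$), so nothing new has to be proved at the sedenion level. You instead take $u+(vw)s$ and $v+(uw)s$, which forces you first to lift \cref{prop:antiassociativity} and the identity $(xs)(ys)=-xy$ from the octonion level to the sixteen-dimensional level, with $s$ in place of $w$ and $\Bo$ in place of $\Boa$. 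That lift is legitimate---the hypotheses of \cref{prop:identity2} hold because octonion closure puts $xy$ in the span of $\Bo$, hence orthogonal to $s$---and your evaluation of the four terms, giving $uv+ws-ws-uv=0$, checks out, including the step $(vw)(uw)=uv$. So your route costs an extra layer of lemmas but is arguably more systematic: it makes explicit the Cayley--Dickson-style doubling rules that a sedenion algebra would have to obey, whereas the paper's witnesses are tuned precisely so that those rules are never needed.
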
 

\begin{proof}
Assume that it is possible to define such a product for some $n>8$. Then $\E^n$ with this product must contain an orthonormal set  $\B'$  as given above. Then we get
 {
\begin{align*}
     (uv+ws)(sv+wu)&=(uv)(sv)+(uv)(wu)+(ws)(sv)+(ws)(wu) \\
    &= -us + vw + wv - su \tag{by \cref{prop:anticommutativity,prop:identity2}} \\
    &= -us - wv + wv + us = 0    \tag{by \cref{prop:anticommutativity}}.
\end{align*}
}%
However, since $uv\perp ws$ and $sv\perp wu$,  the norm of the product on the left-hand side is $\sqrt{2}\sqrt{2}=2$, a contradiction.
\end{proof}

\cref{prop:no16} actually implies that no Euclidean Hurwitz algebra exists for $n>8$ (including $n=\infty$). Indeed, any extension of the octonions will break MoN. %This proves Hurwitz's theorem, which states that real composition algebras can only have dimension 1, 2, 4, or 8. 

{
\section{Historical Notes}\label{sec:history}
In this  section we discuss the relationship of Gadi's development of $\C,\H$, and~$\O$ to two celebrated results by Frobenius (1878) and Hurwitz (1922)  {and their extensions}. 
 {More background on the history of these classical results   can be found in the survey by Baez~\cite{Baez02} %(which is available onine), 
and the book by Conway and Smith~\cite{conway2003quaternions}, and references within.}

\subsection{Hurwitz Theorem}
The results presented in this article are equivalent to a classical theorem by Hurwitz~\cite{Hurwitz1922} that classifies the Euclidean Hurwitz algebras.
	Recall that Euclidean Hurwitz algebras are real vector spaces with a multiplication that satisfies \Cref{axiom:dist}, \Cref{axiom:homo}, and \Cref{axiom:mon}.
 {A more common (but equivalent) definition is that of a real vector space with a bilinear multiplication that satisfies \Cref{axiom:mon}, and for which there exists a unit.}% $e$ such that $e\cdot v=v\cdot e=v$ for every vector $v$.
	\footnote{Some authors define Hurwitz algebras without the requirement of a unit element, in which case \Cref{thm:hurwitz} should be stated with respect to unital Hurwitz algebras.}  
\begin{theorem}[Hurwitz (1922) \cite{Hurwitz1922}]\label{thm:hurwitz}
The only Euclidean Hurwitz algebras are $\R,\C,\H,$ and $\O$.
\end{theorem}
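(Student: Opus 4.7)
The plan is to deduce \Cref{thm:hurwitz} as a direct corollary of the dimension-by-dimension case analysis already completed in \Cref{sect:complex,sect:quaternions,sect:octonions,sect:sedenions}. The strategy will be to split on the dimension $n$ of a hypothetical Euclidean Hurwitz algebra over $\E^n$ and argue that $n$ must lie in $\{1,2,4,8\}$, while simultaneously pinning down the unique algebra in each admissible dimension.

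First I would dispose of $n=1$, where \Cref{axiom:homo} alone forces the multiplication to coincide with the usual one on $\R$. For $n=2$, the reasoning at the end of \Cref{sect:complex} applies: the second basis vector $e_2$ lies in $\U_2$ with unit norm, so by \cref{prop:imaginary_squares} it squares to $-1$, and then \Cref{axiom:dist,axiom:homo} force the remaining multiplication table, yielding $\C$. For $n=3$, any two orthonormal $u,v \in \U_3$ would by \cref{prop:fourth_dimension} produce a vector $uv$ orthogonal to $1,u,v$, which has no room in $\E^3$. For $n=4$, the development in \Cref{sect:quaternions} pins down the multiplication on $\Bh = \{1,u,v,uv\}$ uniquely via \cref{prop:imaginary_squares,prop:anticommutativity,prop:identity}, giving $\H$.

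For $5 \le n \le 7$, any such algebra must contain $\H$ together with a unit vector $w$ orthogonal to $\Bh$; then \cref{prop:eighth_dimension} furnishes eight pairwise orthogonal unit vectors, contradicting $n<8$. For $n=8$, the argument of \Cref{sect:octonions} together with \cref{prop:identity2,prop:antiassociativity} fixes the multiplication table on $\Bo$, giving $\O$. Finally, for $n>8$, \cref{prop:no16} exhibits an explicit product that both vanishes and has norm $2$, ruling out any such algebra.

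The main issue will be organizational rather than mathematical: one must be explicit about what ``only'' means in the theorem statement. In each admissible dimension the multiplication table on a chosen orthonormal basis is forced by the axioms, and bilinearity extends it uniquely to all of $\E^n$, so ``only'' should be read up to the natural isomorphism identifying algebras of the same dimension. The existence direction --- that the tables for $\R,\C,\H,\O$ genuinely satisfy MoN --- is left to a brute-force calculation, as flagged at the end of \Cref{sect:premises}, and is not part of what this theorem requires us to prove.
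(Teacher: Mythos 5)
Your proposal is correct and matches the paper's own treatment: the paper explicitly presents \Cref{thm:hurwitz} as the cumulative consequence of \Cref{prop:imaginary_squares,prop:fourth_dimension,prop:eighth_dimension,prop:no16} and the uniqueness of the multiplication tables forced in each of the dimensions $1,2,4,8$, with sufficiency (verifying MoN) delegated to brute-force calculation exactly as you note. Your dimension-by-dimension assembly, including the exclusion of $n=3$, $5\le n\le 7$, and $n>8$, is the same argument the paper intends.
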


Thus, Gadi's development of $\C,\H$, and $\O$ provides an elementary proof of \Cref{thm:hurwitz}  {which is based on classical geometry.}
 {Previous proofs of this theorem exploit more advanced tools and require familiarity with modern algebra.}
%All previous proofs of \Cref{thm:hurwitz} we are aware of use more advanced and modern tools which are beyond the scope of this paper (e.g.\ {\it The Fundamental Theorem of Algebra}, which asserts that every non-constant complex polynomial has a root). 

\Cref{thm:hurwitz} was extensively generalized: Albert has shown that its conclusion continues to hold even if one replaces the Euclidean  {norm} in \Cref{axiom:mon} with an arbitrary norm~\cite{Albert47}, and Urbanik and Wright further extended  Albert's result to {\it infinite} dimensional algebras~\cite{Urbanik60}.

It is interesting to note that the requirement of a unit is essential:
	indeed, consider the following multiplication over $\R^2$, denoted by $\heartsuit$:
	\[ (a,b)\heartsuit (c,d) = (ad+bc,ac-bd).\]
	Thus, $\heartsuit$ is a simple modification of the standard complex multiplication
	where one  {swaps} the first and second coordinates of the result.
	One can verify that besides the existence of a unit, this multiplication satisfies all the requirements of \Cref{thm:hurwitz}  (in fact, it is even commutative!). 
	We refer the reader to \cite{Althoen83} for an accessible classification of related $2$-dimensional algebras.

\subsection{Frobenius Theorem}
Another classical result that is closely related to this article is due to Frobenius~\cite{F1878}.
	Here, a real division algebra is a (finite-dimensional) real vector space $V$ with a bilinear  {multiplication that} has no {\it zero-divisors}: 
	$(\forall a,b\in V): a\cdot b=0 \implies (a=0 \text{ or } b=0)$.
\begin{theorem}[Frobenius (1878) \cite{F1878}]\label{thm:frobenius}
	The only associative real division algebra are $\R,\C,\H$.
\end{theorem}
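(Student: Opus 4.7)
The plan is to derive \Cref{thm:frobenius} from Hurwitz's theorem (\Cref{thm:hurwitz}) by constructing a Euclidean norm on any finite-dimensional associative real division algebra $D$ that satisfies MoN. Once such a norm is in place, \Cref{thm:hurwitz} forces $D \in \{\R, \C, \H, \O\}$, and the non-associativity of $\O$ (established in \cref{sect:octonions}) immediately rules out the octonions, leaving exactly $\R, \C, \H$.

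The first step is to extract a quadratic structure on every element. For any $x \in D$, the subalgebra $\R[x]$ is commutative, associative, finite-dimensional, and inherits the absence of zero-divisors from $D$. A short argument (injective $\R$-linear maps on finite-dimensional spaces are surjective) shows that a finite-dimensional integral domain over $\R$ is automatically a field, so $\R[x]$ is a finite field extension of $\R$ and hence isomorphic to either $\R$ or $\C$. Consequently every $x \in D$ satisfies a real monic polynomial of degree at most two; writing this as $x^2 - 2a_x\, x + b_x = 0$, I define the real part $\mathrm{Re}(x) := a_x$, the conjugate $\bar x := 2a_x - x$, and the candidate norm squared $N(x) := x \bar x = b_x \in \R$.

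The technical heart of the proof is to show that $\mathrm{Re}: D \to \R$ is $\R$-linear, equivalently that $V := \{x \in D : x^2 \in \R_{\le 0}\}\cup\{0\}$ is a linear subspace yielding a direct sum decomposition $D = \R \oplus V$. My intended attack is polarization: for $x, y \in V$ the identity $xy + yx = (x+y)^2 - x^2 - y^2$, together with the quadratic relation for $x+y$, forces $xy + yx$ to lie in $\R$, and the resulting symmetric bilinear form $\langle x, y\rangle := -\tfrac12(xy+yx)$ on $V$ then allows one to verify closure of $V$ under sums. Once linearity of the real part is in place, associativity of $D$ gives that conjugation is an anti-involution ($\overline{xy} = \bar y\, \bar x$), and then
\[
N(xy) = (xy)(\bar y\,\bar x) = x(y \bar y)\bar x = N(y)\, x\bar x = N(x)N(y)
\]
using associativity crucially. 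Positivity of $N$ is immediate from the no-zero-divisor hypothesis ($N(x)=0$ forces $x\bar x = 0$, hence $x=0$), so $\|x\| := \sqrt{N(x)}$ is a genuine Euclidean norm on $D$ satisfying MoN, and \Cref{thm:hurwitz} completes the argument.

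I expect the main obstacle to be the linearity of the real part, i.e.\ showing $V$ is closed under addition. Unlike the other steps, this does not follow by a one-line symbolic manipulation: one knows a priori only that \emph{individual} elements satisfy a quadratic, and must argue that the decomposition into ``real'' and ``imaginary'' parts is consistent across sums without presupposing the inner-product structure that one is trying to build. The polarization approach above is the cleanest route I see, but it requires some care in simultaneously tracking the quadratic relations for $x$, $y$, and $x+y$.
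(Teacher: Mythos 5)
The paper itself does not prove \Cref{thm:frobenius}: it is quoted as a classical result, with the explicit remark that ``all the proofs \dots{} we are aware of use algebraic tools that are beyond the scope of this manuscript,'' a pointer to Palais's argument, and the observation that a proof at the level of the rest of the paper is an open invitation. So your proposal cannot be compared to an in-paper proof; it must stand on its own. On its own terms it is a sound plan, and it is essentially the standard modern route (Palais-style): use the fundamental theorem of algebra to show $\R[x]\cong\R$ or $\C$ for each $x$, extract the decomposition $D=\R\oplus V$ by polarization, build the multiplicative positive-definite form $N(x)=x\bar x$, and then --- your one genuinely new twist relative to the textbook proof --- hand the resulting Euclidean norm to \Cref{thm:hurwitz} and discard $\O$ by non-associativity rather than finishing the classification by hand. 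That reduction is legitimate (an isometry of $(D,\sqrt N)$ with $\E^n$ carrying $1$ to $e_1$ produces a multiplication satisfying Axioms 1--3), and it does buy you something: you reuse the paper's machinery instead of redoing the case analysis. Note, however, that because you invoke the fundamental theorem of algebra at the first step, your proof inherits exactly the non-elementary ingredient the paper complains about; it does not answer the paper's implicit challenge.

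There is one concrete gap beyond the difficulty you flag. You correctly identify closure of $V$ under addition as the hard step and the polarization identity as the tool (it works, with some case analysis when $1,x,y$ are linearly dependent or $x\pm y$ is real). But the assertion that ``associativity of $D$ gives that conjugation is an anti-involution'' conceals a second nontrivial claim: $\overline{xy}=\bar y\,\bar x$ requires $\mathrm{Re}(xy)=\mathrm{Re}(yx)$, equivalently that $xy-yx\in V$ for all $x,y$. Polarization only gives you that $xy+yx$ has the \emph{same} $V$-component cancellation, i.e.\ that the $V$-parts of $xy$ and $yx$ are opposite; it does not by itself force their real parts to agree. You need a separate argument, e.g.\ that every nonzero $x\in D$ is invertible (left multiplication is injective, hence bijective, on a finite-dimensional space), so $yx=x^{-1}(xy)x$ and conjugate elements satisfy the same minimal quadratic over $\R$, hence have the same real part. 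Without this, the chain $N(xy)=(xy)(\bar y\,\bar x)=N(x)N(y)$ is unjustified. A smaller cosmetic issue: $a_x$ and $b_x$ are not well defined for $x\in\R$ (real elements satisfy many monic quadratics), so $\mathrm{Re}$ and $N$ should be defined via the decomposition $D=\R\oplus V$ once it is established, not elementwise from the quadratic relation.
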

Thus, unlike \Cref{thm:hurwitz}, here one does not pose any ``geometric'' assumption such as \Cref{axiom:mon}.
	On the other hand, the multiplication is required to be associative: $(\forall a,b,c): (a\cdot b)\cdot c = a \cdot (b\cdot c)$.\footnote{\Cref{thm:frobenius} has another assumption which is absent in \Cref{thm:hurwitz}: $(\forall a,b\in V): a\cdot b=0 \implies (a=0 \text{ or } b=0)$. However, one can show that \Cref{axiom:mon} implies this assumption using the fact that $\| x\| = 0 \iff x=0$ for every $x\in\E^n$.}
	Furthermore, all the proofs of \Cref{thm:frobenius} we are aware of use algebraic tools that are beyond the scope of this manuscript. 
	A rather elementary proof of \Cref{thm:frobenius} (which still uses the fundamental theorem of algebra) 
	had been obtained by Richard Palais  (\cite{Palais1968}, see also Theorem 3.12 in \cite{Lam01}).
	It would be interesting to find elementary proofs at the level of Gadi's proof of \Cref{thm:hurwitz} that was presented in here.
	\Cref{thm:frobenius} was extended by Adams~\cite{adams1958} and by Bott and Milnor~\cite{bott1958}
	who showed that real division algebras may exist only in dimensions $1,2,4,8$.
	Their proofs heavily exploit topological arguments.
	We comment that there exist such algebras that are nonisomorphic to $\R,\C,\H$, or $\O$
	(the multiplication $\heartsuit$ defined above gives rise to such an algebra).}

\section{Conclusion}

In this article we have presented Gadi Moran's construction of complex numbers, quaternions, and octonions. The proofs used in this construction are based solely on properties of Euclidean geometry, and more specifically on the Pythagorean theorem. As such, this construction is more intuitive and is likely to be more accessible to high school students than standard constructions that derive Euclidean Hurwitz algebras from a purely algebraic standpoint.

Interestingly, our construction uses the properties of extended multiplication in $\E^n$; we need not ponder on their geometrical significance. As it is well known, multiplication by imaginary numbers can be visualized as consisting of rotations in $n$-dimensional space. For instance, multiplication of a complex number by $i$ performs a rotation by 90 degrees in the plane. This idea, when applied to quaternions, results in very useful techniques to model rotations in 3-dimensional space, which are most commonly used in computer graphics.

\section*{Acknowledgments}
We would like to extend our special thanks to Arik Moran for bringing Gadi's manuscript to the first author's attention and thus initiating this work, and also for his helpful remarks and continuous support during the writing of this paper.  {We would also like to thank Yuval Filmus and Ron Holzman for their insightful comments. Last but not least we would like to thank  {the anonymous referees} for many helpful comments and for drawing our attention to Richard Palais proof of Frobenius's theorem.}

\bibliographystyle{plain}
\bibliography{GadiPaperAMM_R2}

\begin{thebibliography}{10}

\bibitem{adams1958}
J.~F. Adams.
\newblock On the nonexistence of elements of hopf invariant one.
\newblock {\em Bull. Amer. Math. Soc.}, 64(5):279--282, 09 1958.

\bibitem{Albert47}
A.~A. Albert.
\newblock Absolute valued real algebras.
\newblock {\em Annals of Mathematics}, 48(2):495--501, 1947.

\bibitem{Althoen83}
Steven~C. Althoen and Lawrence~D. Kugler.
\newblock When is r2 a division algebra?
\newblock {\em The American Mathematical Monthly}, 90(9):625--635, 1983.

\bibitem{Baez02}
J.~C. Baez.
\newblock The octonions.
\newblock {\em Bull. Amer. Math. Soc.}, 39:145--205, 2002.

\bibitem{bott1958}
R.~Bott and J.~Milnor.
\newblock On the parallelizability of the spheres.
\newblock {\em Bull. Amer. Math. Soc.}, 64:87--89, 05 1958.

\bibitem{cayley1845}
Arthur Cayley.
\newblock {XXVIII. On Jacobi's Elliptic functions, in reply to the Rev. Brice
  Bronwin; and on Quaternions : To the editors of the Philosophical Magazine
  and Journal}, March 1845.

\bibitem{conway2003quaternions}
J.H. Conway and D.A. Smith.
\newblock {\em On Quaternions and Octonions}.
\newblock Ak Peters Series. Taylor \& Francis, 2003.

\bibitem{F1878}
Georg Frobenius.
\newblock {\"U}ber lineare substitutionen und bilineare formen.
\newblock {\em J. Reine Angew. Math}, 84:1--63, 1878.

\bibitem{graves1845}
John~T Graves.
\newblock Xlvi. on a connection between the general theory of normal couples
  and the theory of complete quadratic functions of two variables.
\newblock {\em The London, Edinburgh, and Dublin Philosophical Magazine and
  Journal of Science}, 26(173):315--320, 1845.

\bibitem{Hurwitz1922}
A.~Hurwitz.
\newblock {\"U}ber die komposition der quadratischen formen.
\newblock {\em Mathematische Annalen}, 88(1):1--25, 1922.

\bibitem{Lam01}
T~Y Lam.
\newblock {\em {A first course in noncommutative rings; 2nd ed.}}
\newblock Graduate texts in mathematics. Springer, New York, NY, 2001.

\bibitem{gadis}
Gadi Moran.
\newblock Real normed algebras revisited, December 2015.
\newblock Unpublished manuscript,
  www.dropbox.com/s/9c0vonev57ypciq/Gadi\%20Moran\%20Last\%20Paper.pdf?dl=0.
  Also available upon request.

\bibitem{Palais1968}
R.S. Palais.
\newblock The classification of real division algebras.
\newblock {\em American Mathematical Monthly}, 75:366--368, 1968.

\bibitem{Urbanik60}
K.~Urbanik and F.~B. Wright.
\newblock Absolute valued algebras.
\newblock {\em Proc. Amer. Math. Soc.}, 11:861--866, 1960.

\bibitem{vdw76}
B.L. van~der Waerden.
\newblock Hamilton's discovery of quaternions.
\newblock {\em Mathematics Magazine}, 49(5):227--234, 1976.

\end{thebibliography}
\end{document}